\documentclass[12pt]{article}
\usepackage{mathrsfs}
\usepackage{amsmath}
\usepackage{float}
\usepackage{cite}
\usepackage{esint}
\usepackage[all]{xy}
\usepackage{amssymb}
\usepackage{amsfonts}
\usepackage{amsthm}
\usepackage{color}
\usepackage{graphicx}
\usepackage{hyperref}
\usepackage{bm}
\usepackage{indentfirst}
\usepackage{geometry}
\theoremstyle{plain}
\newtheorem{thm}{Theorem}[section]

\newtheorem{defn}[thm]{Definition}
\newtheorem{prop}[thm]{Proposition}

\newtheorem{them}[thm]{Theorem}
\theoremstyle{remark}
\newtheorem{ex}[thm]{Example}
\newtheorem{rmk}[thm]{Remark}
\theoremstyle{plain}
\newtheorem*{theoremoneprime}{Theorem 1.1$'$}

\newcommand{\Z}{\mathbb{Z}}

\newcommand{\R}{\mathbb{R}}


\numberwithin{equation}{section}

\title{Nonsmoothable Embedded Surfaces in 4-Manifolds That Become Smoothable after a Single Stabilization}
\author{Zuyi Zhang}
\date{ }

\begin{document}

\maketitle
\begin{abstract}
    In this paper, we find infinitely many examples of nonsmoothable embedded surfaces in a 4-manifold that become smoothable after taking the connected sum with $S^2\times S^2$. To the author’s knowledge, no such examples have previously been discovered. In \cite{cha2025light}, Cha and Kim asked for the minimal number of stabilizations required to turn a nonsmoothable surface into a smoothable one. Some sufficient conditions that ensure this number is 1 are provided. In addition, we show that there is a nonsmoothable surface in a $K3$ surface such that after a single stabilization, the topological isotopic class of this surface contains infinitely many smooth representatives that are pairwisely not smoothly isotopic.
\end{abstract}

\section{Introduction}

A topologically embedded closed surface $F$ in a closed smooth oriented 4-manifold $X$ is called \textbf{smoothable} (resp. \textbf{nonsmoothable}) if there is a (resp. there is no) smooth surface topologically isotopic to $F$. A \textbf{stabilization} refers to taking the connected sum of $X$ with $S^2\times S^2$. In \cite[Theorem D]{cha2025light}, it is proved that any such $F$ is topologically isotopic to a smooth embedded surface after performing sufficiently many stabilizations. Cha and Kim pointed out that it is meaningful to study the number
\[
s_X(F):=\min\{k|\ F\ is\ smoothable\ in\ X\#k(S^2\times S^2)\}.
\]
The main theorem of the paper is:

\begin{them}\label{thm:main}
    Let $F$ be a locally flat nonsmoothable embedded closed simple surface in a smooth simply connected closed 4-manifold $X$. If $[F]\in H_2(X;\Z)$ is a primitive and ordinary class $($i.e. a class not characteristic$)$, then $F$ is smoothable in $X\#(S^2\times S^2)$. In particular, $s_X(F)=1$.
\end{them}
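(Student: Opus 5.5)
Two facts drive the plan: a topological classification of simple surfaces in simply connected 4-manifolds up to topological isotopy, and a smooth realization result once the ambient manifold has been stabilized. Here "simple" means $\pi_1(X\setminus F)$ is abelian, hence $\Z/d$ where $d$ is the divisibility of $[F]$; since $[F]$ is primitive, the complement has $\pi_1\cong\Z$ and is simply connected after accounting for the meridian. For such surfaces, the work of Conway--Powell (and Lee--Wilczy\'nski for the locally flat case) shows that two $\Z$-surfaces of the same genus representing the same homology class are topologically isotopic, provided the Kirby--Siebenmann type obstruction of the complement matches and the genus is not too small relative to the homology. I will use this classification directly: it is enough to produce a smooth $\Z$-surface $F'$ in $X\#(S^2\times S^2)$ with the same genus and homology class $[F]\oplus 0$ as $F$. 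The uniqueness then gives that $F'$ is topologically isotopic to $F$, regarded inside the stabilized manifold.

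\textbf{Step 1: place the surface in a standard position.} Since $[F]$ is primitive and ordinary (not characteristic), I expect that after one stabilization $\mathrm{Aut}(H_2)$ acts well: by Wall's theorem, the smooth diffeomorphisms of $X\#(S^2\times S^2)$ act transitively on primitive ordinary classes with fixed square. This lets me move $[F]$ to a class of the standard form $\alpha+e+n f$, where $e,f$ are the hyperbolic basis of the new $S^2\times S^2$ summand and $\alpha$ is suitable. A class of the form $e+nf$ in $S^2\times S^2$ is represented by a smoothly embedded sphere, and the complement of that sphere has $\pi_1=1$ when the class is primitive. I then tube this sphere into a smooth representative of $\alpha$, or arrange $\alpha=0$ by the choice of Wall automorphism. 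I will add trivial handles to reach the genus $g$ of $F$. The result is a smooth surface $F'$ with $\pi_1(\text{complement})$ abelian, since a meridian-normally-generated group that is cyclic is preserved by tubing with unknotted tori.

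\textbf{Step 2: match the remaining invariant.} The classification of $\Z$-surfaces also involves the Kirby--Siebenmann invariant of the exterior, equivalently of $X$ (both $F$ and $F'$ live in a smooth ambient manifold). It may also involve an equivariant intersection form on $\pi_1=\Z$ covers. For $\pi_1$ of the exterior equal to $\Z$, Conway--Powell show that the isotopy class is determined by the genus and the class once the form is stably the standard one, and the stabilization supplies exactly this hyperbolic summand. Ordinarity is what rules out the characteristic case, where an extra $\Z/2$ obstruction (the Kirby--Siebenmann invariant of the complement relative to the Arf-type invariant $\mathrm{ks}(X)-(F\cdot F-\sigma)/8$) could distinguish $F$ from every smooth surface. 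I expect this step to be the main obstacle. It requires checking carefully the hypotheses of the available uniqueness theorem, in particular the genus or rank conditions that might require $b_2$ large enough, and one $S^2\times S^2$ is plausibly enough to supply them. I would first try quoting the Conway--Powell $\Z$-surface theorem in $X\#(S^2\times S^2)$ directly.

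Finally, $s_X(F)\ge1$ holds because $F$ is nonsmoothable in $X$ by hypothesis. Steps 1 and 2 then give $s_X(F)=1$.
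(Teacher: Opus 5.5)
\textbf{There is a genuine gap, and it starts in your first sentence.} For a simple surface in a primitive class, $H_1(X\setminus F)\cong\Z/d$ with $d=1$. Since $\pi_1(X\setminus F)$ is abelian, it is \emph{trivial}, not $\Z$. A closed surface in a closed simply connected $X$ with knot group $\Z$ must be null-homologous. So the Conway--Powell classification of $\Z$-surfaces you plan to quote says nothing about $F$ or $F'$. Step 2, which you yourself flag as the main obstacle, is therefore built on an inapplicable theorem and left at ``plausibly enough.'' What you actually need is the uniqueness statement for homologous, equal-genus, locally flat surfaces with \emph{simply connected} complements in a closed simply connected 4-manifold. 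This is the Boyer-type result the paper invokes in the proof of Proposition~\ref{prop:1}. Once you use it, the discussion of equivariant forms and Kirby--Siebenmann obstructions disappears. $F$ still has simply connected complement in $X\#(S^2\times S^2)$, and both exteriors have the Kirby--Siebenmann invariant of the smooth ambient manifold, namely $0$. In your scheme, ordinarity enters only through the existence step.

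\textbf{Step 1 has a second gap: Wall's results need $X$ indefinite.} Realization of isometries by diffeomorphisms, transitivity on primitive vectors of given square and type, and hence smooth spheres in primitive ordinary classes all require the unstabilized summand to be indefinite. That is what makes $\mathrm{rank}-|\sigma|\ge 4$ for $X\#(S^2\times S^2)$. One hyperbolic summand is not enough in general. In $S^2\times S^2$, the classes $2e_1+3e_2$ and $e_1+6e_2$ are both primitive and ordinary with square $12$, but only the latter is a smooth sphere class (Kuga). So for definite $X$, which the theorem allows, you have no smooth model $F'$. Separately, arranging $\alpha=0$ is impossible when $[F]^2$ is odd, though citing Wall's sphere-representability directly would bypass this.

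\textbf{How the paper avoids both issues.} It never builds a model surface or uses a classification. It uses $\pi_1(X\setminus F)=1$ to cap a normal disc of $F_1$ with an immersed disc in the complement, where $F_1$ is Gompf's isotopic copy with one singular point. This gives an immersed sphere meeting $F_1$ once. The AKMRS argument, which is where ordinarity is used, upgrades it to an embedded framed geometric dual in $X\#(S^2\times S^2)$. Cha--Kim's Theorem C then gives topological isotopy to a smooth surface directly, for every $X$. If you correct Step 2 as above, your route becomes a valid alternative for indefinite $X$. It trades the dual-sphere construction for Wall's theorem plus the simply-connected-complement uniqueness theorem.
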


\noindent In the above theorem, the local flatness of $F$ means that for each $x\in F$, there is a neighborhood $U\subset X$ such that $(U, U\cap F)$ is homeomorphic to $(\R^4,\R^2)$. The surface $F$ is defined to be \textbf{simple} if $\pi_1(X-F)$ is abelian. An element $a\in H_2(X;\Z)$ is called \textbf{characteristic} if $\mathrm{PD}(a)\cdot x\equiv x^2\ (mod\ 2)$ for all $x\in H^2(X;\Z)$. A class $a\in H_2(X;\Z)$ is said to be \textbf{primitive} if $a$ cannot be written as $a=dx$ unless $d=\pm1$ for $x\in H_2(X;\Z)$.

Theorem \ref{thm:main} contains some information of the stabilization in the smooth category of 4-manifolds. One related invariant is the \textbf{smooth genus function} $g_X(\alpha)$, which is defined by the smallest genus of a smooth embedding representing the homology class $\alpha$ in the second homology of the closed smooth 4-manifold $X$. As in the examples in Section 3, there are manifolds with certain classes whose genus function is strictly positive but vanishes after a stabilization. This phenomena is a consequence of Wall \cite{wall1964diffeomorphisms}. In fact, \cite[Theorem 3]{wall1964diffeomorphisms} states that any primitive and ordinary class of $H_2(X\#S^2\times S^2)$ has embedded smooth spheres as a representative and these spheres are essentially from the standard spheres from the $S^2\times S^2$ component. In contrast, the examples in Section 3 show that it is the nonsmoothable 2-spheres becoming smoothable in the stabilization that makes the minimal genus vanishes. Thus, Theorem \ref{thm:main} provides a more intuitive explaination for the decreasing of the minimal genus function under stabilizations.

For another point of view, stabilizations does not eliminate all exotic phenomenon. As a consequence of \cite[Example 1)]{auckly2019isotopy}, there is a nonsmoothable surface in a $K3$ surface such that after a single stabilization, the topological isotopic class of this surface contains infinitely many smooth representatives that are pairwisely not smoothly isotopic. See Proposition \ref{prop:1} for detail.\\

The proof of the main theorem consists of finding a smooth geometric dual (Definition \ref{def:geodual}) of $F_1$ in $X\#S^2\times S^2$. Once such a dual is found, $F_1$ is smoothable by \cite[Theorem C]{cha2025light}. Here $F_1$ is a surface topologically isotopic to $F$ and is nonsmooth at only one point. In fact, \cite[Theorem C]{cha2025light} claims that any topological surface with a smooth geometric dual is topologically isotopic to a smooth surface. The method of constructing a smooth geometric dual is essentially due to Auckly-Kim-Melvin-Ruberman-Schwartz \cite{auckly2019isotopy}.\\

In the next section, we give the proof of the main theorem. Examples of nonsmoothable surfaces becoming smoothable after a single stabilization are provided in the last section. The difference of genus functions before and after the stabilization is given as well.

By inspecting the proof of Theorem \ref{thm:main}, the requirements of $[F]$ being primitive and $F$ being a simple surface is to ensure that the meridian of $F_1$ at a point is contractible in $X-F_1$. This is necessary for constructing a smooth immersed sphere that intersecting $F_1$ only once so that the argument from \cite{auckly2019isotopy} can be applied. Details can be found in Section 2. Therefore, the following version of the main theorem is true as well.

\begin{theoremoneprime}
    Let $F$ be a locally flat nonsmoothable embedded closed surface in a smooth simply connected closed 4-manifold $X$. If $[F]\in H_2(X;\Z)$ is an ordinary class and the meridian of $F$ at a point is contractible in $X-F$, then $F$ is smoothable in $X\#(S^2\times S^2)$. In particular, $s_X(F)=1$.
\end{theoremoneprime}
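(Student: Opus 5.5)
The plan is to find a smooth geometric dual for a surface topologically isotopic to $F$ in $X\#(S^2\times S^2)$, and then invoke \cite[Theorem C]{cha2025light}: a topological surface with a smooth geometric dual is topologically isotopic to a smooth surface.

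\textbf{Step 1 (isotope to one bad point).} First isotope $F$ topologically to a surface $F_1$ that is smooth away from a single point $p$. I would use Freedman--Quinn: a locally flat surface has a normal bundle and can be smoothed away from a point, since the smoothing obstruction is concentrated in dimension four and can be pushed into a small ball. The stabilization is taken away from $F_1$. Since $F$ is topologically isotopic to $F_1$, it suffices to make $F_1$ smoothable.

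\textbf{Step 2 (an immersed dual).} Next, build a smooth immersed sphere $S\subset X$ meeting $F_1$ transversally in exactly one point, in the smooth part of $F_1$.
\begin{itemize}
\item Take a small normal disk $D$ to $F_1$ at a smooth point. Its boundary is the meridian $\mu$.
\item By hypothesis $\mu$ is null-homotopic in $X-F_1$. For Theorem \ref{thm:main} this holds because $\pi_1(X-F)$ is abelian, so it equals $H_1(X-F)$, which is generated by $\mu$ of order $d$ where $[F]$ is $d$ times a primitive class. Primitivity and simple connectivity of $X$ then force $\mu$ to be trivial.
\item Choose a smooth null-homotopy in the complement $X-F_1$; this is an open smooth manifold. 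Generic position makes it an immersed disk $D'$.
\item Set $S=D\cup D'$. Then $S$ is a smooth immersed sphere with $S\cdot F_1=1$ geometrically.
\end{itemize}

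\textbf{Step 3 (make the dual embedded and framed).} Now pass to $X\#(S^2\times S^2)$ and remove the double points of $S$, following Auckly--Kim--Melvin--Ruberman--Schwartz \cite{auckly2019isotopy}.
\begin{itemize}
\item Tube $S$ into copies of $S^2\times pt$ to resolve each self-intersection. Concretely, for a double point, one sheet is tubed along $pt\times S^2$ so that its intersections are traded into the summand, where $S^2\times pt$ supplies a geometric dual. A sphere with a geometric dual can be used to remove double points by the standard Norman trick.
\item Adjust the self-intersection number by adding multiples of $S^2\times pt$ and $pt\times S^2$ classes: tubing with $S^2\times pt\,\#\,k(pt\times S^2)$ changes the square by $2k$.
\item The geometric intersection with $F_1$ stays one, since the summand is disjoint from $F_1$.
\end{itemize}

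\textbf{The obstacle: parity.} A geometric dual for \cite[Theorem C]{cha2025light} must (I expect) be framed, or of a prescribed square, and the square of $S$ can only be changed by even amounts in this way. The ordinary hypothesis is used exactly here. Because $[F]$ is not characteristic, there is a class $x$ with $x\cdot F\equiv 0$ and $x^2$ odd, or with $x\cdot F$ and $x^2$ of different parities. Represent $x$ by a smooth immersed surface and tube it into $S$, first making it disjoint from $F_1$ or correcting its intersections with $F_1$ by tubing along meridians, which are null-homotopic. This changes the parity of $S\cdot S$ while keeping the algebraic and geometric intersection with $F_1$ equal to $1$. A genus increase is avoided because in simply connected $X$ such $x$ is represented by immersed spheres.

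The delicate part is showing that these double-point removals and tubings never reintroduce intersections with $F_1$, and that the resulting framed embedded sphere is smooth. Once that holds, \cite[Theorem C]{cha2025light} applies, giving $s_X(F)\le 1$. Since $F$ is nonsmoothable in $X$, $s_X(F)=1$.
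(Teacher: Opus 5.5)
Your proposal is correct and follows essentially the same route as the paper. Both arguments isotope $F$ to a surface $F_1$ that is smooth away from one point, build an immersed sphere meeting $F_1$ once from the null-homotopic meridian, upgrade it to a smooth embedded framed dual in $X\#(S^2\times S^2)$ using the ordinary hypothesis, and then apply \cite[Theorem C]{cha2025light}.

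The differences are in how each step is justified. The paper cites Gompf \cite[Theorem 1.7]{gompf2025topologically} for the one-singular-point step, and simply invokes the argument of \cite{auckly2019isotopy} for the embedding and framing. You instead sketch that argument yourself via the Norman trick and a parity correction. In that correction, the class you tube in should satisfy $x\cdot F=0$ exactly, for instance $x-(x\cdot F)[S]$, and the correction is only needed when $S\cdot S$ is odd.
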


\noindent\textbf{AI disclosure:} The author used ChatGPT for exploratory purposes and grammar checking. The paper was written entirely by the author. The author takes full responsibility for the correctness of the paper.\\

\noindent\textbf{Acknowledgement:} The author would like to thank Professor Jae Choon Cha for introducing him to this topic during a short course at Peking University in 2026 and for patiently answering his questions.

\section{Proof of the main theorem}
The goal of this section is to prove Theorem \ref{thm:main}. In order to do this, we give the definition of a geometric dual:

\begin{defn}\label{def:geodual}
    A \textbf{geometric dual} $G$ of a topological surface $F$ in a smooth 4-manifold without boundary is a 2-sphere such that
    \begin{itemize}
        \item $G$ intersects F transversely only once,
        \item the algebraic intersection number of $G$ with itself is $0$.
    \end{itemize}
\end{defn}

\begin{rmk}
    This definition is equivalent to the framed geometric dual in \cite{cha2025light}.
\end{rmk}

\noindent A geometric dual is useful in geometric topology in the sense that it helps to remove the intersections between surfaces.

\begin{proof}[Proof of Theorem \ref{thm:main}]
    The key of the proof is to find a smooth geometric dual. The starting point is to construct a smoothly immersed sphere that intersects the surface only once. To achieve this, we use the fact that $[F]$ is primitive and $F$ is simple.

    Since $[F]$ is primitive, using the relative long exact sequence and Thom isomorphism for simply connected $X$, one can show $H_1(X-F)=0$. In fact, if a class is of the form $dx$, where $x\in H_2(X;\Z)$ being primitive and $0\neq d\in\Z$, then $H_1(X-S;\Z)=\Z_d$ for any representative $S\in dx$. The calculation is carried out in \cite[Lemma 3.1]{hsiang1971embedding}. Because $F$ is primitive, $H_1(X-F;\Z)=\Z_d$ with $d=1$. Therefore, $H_1(X-F)=0$. Moreover, $F$ is simple, so $\pi_1(X-F)$ and $H_1(X-F;\Z)$ are isomorphic. As a result, $X-F$ is simply connected.

    In \cite[Theorem 1.7]{gompf2025topologically}, Gompf shows that any locally flat surface in a smooth 4-manifold is topologically isotopic to another surface that is smooth except at a singular point. We apply this theorem to $F$ to get a surface $F_1$ that is nonsmooth at only one point. Let $p\in F_1$ be a smooth point. Then the unit disc $D$ of the fiber of the normal bundle of $F_1$ at $p$ intersects $F_1$ only once. Because $\pi_1(X-F_1)=\pi_1(X-F)$ is trivial, $\partial D$ bounds a smoothly immersed disc in $X-F_1$. After gluing this immersed disc to $D$ along the common boundary, one gets a smoothly immersed 2-sphere $G_0$ such that $G_0\pitchfork F_1=p$.

    Under the assumption that $[F_1]=[F]$ is ordinary, the argument of the proof of the main theorem in \cite{auckly2019isotopy} implies that there is a smoothly embedded 2-sphere $G\subset X\#(S^2\times S^2)$, constructed out of $G_0$, that intersects $F_1$ exactly once and has self-intersection 0. The surface in \cite{auckly2019isotopy} is assumed to be smooth, but the argument still work for surfaces with only one nonsmooth point. {As a result, we construct a smooth geometric dual $G\subset X\#(S^2\times S^2)$ of a surface topologically isotopic to $F$}.


    With the smooth geometric dual $G$ of $F_1$, \cite[Theorem C]{cha2025light} can be applied to obtain a smooth embedded surface that is topologically isotopic to $F_1$ (thus topologically isotopic to $F$) in $X\#(S^2\times S^2)$. This finishes the proof that $F$ is smoothable in $X\#(S^2\times S^2)$.
\end{proof}

\noindent As mentioned in the proof of the main theorem in \cite{auckly2019isotopy}, the above argument holds for replacing $[F]$ ordinary and $X\#S^2\times S^2$ by $[F]$ characteristic and $X\#S^2\tilde\times S^2$, where $S^2\tilde\times S^2$ is the nontrivial $S^2$-bundle over $S^2$. So the following is true as well.

\begin{them}
    Let $F$ be a locally flat nonsmoothable embedded closed simple surface in a smooth simply connected closed 4-manifold $X$. If $[F]\in H_2(X;\Z)$ is a primitive and characteristic class, then $F$ is smoothable in $X\#(S^2\tilde\times S^2)$.
\end{them}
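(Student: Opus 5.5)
The plan follows the proof of Theorem \ref{thm:main} step by step. The only change is in the stabilization step, where the parity of the normal Euler number of the immersed dual decides which $S^2$-bundle over $S^2$ must be summed on.

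\textbf{Step 1: $X-F$ is simply connected.} Since $[F]$ is primitive, \cite[Lemma 3.1]{hsiang1971embedding} gives $H_1(X-F;\Z)=0$. Since $F$ is simple, $\pi_1(X-F)$ is abelian, so it equals $H_1$ and vanishes.

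\textbf{Step 2: an immersed dual.} Apply \cite[Theorem 1.7]{gompf2025topologically} to replace $F$ by a topologically isotopic $F_1$ that is smooth away from a single point. The complement group is unchanged, so $\pi_1(X-F_1)=1$. At a smooth point $p$, take a normal disc $D$. Its boundary, the meridian, bounds a smoothly immersed disc in $X-F_1$. Gluing gives a smoothly immersed sphere $G_0$ with $G_0\pitchfork F_1=\{p\}$. After adding local kinks we may assume the algebraic self-intersection of $G_0$ has any value of a fixed parity.

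\textbf{Step 3: embedding and framing $G_0$ after one stabilization.} This is the AKMRS step \cite{auckly2019isotopy}, and it is where characteristic replaces ordinary.

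First, remove the self-intersections of $G_0$. Tube each double point into a copy of a sphere from the summand $S^2\tilde\times S^2$, or equivalently perform a Norman trick using the standard sphere of the summand that has a geometrically dual sphere. This yields an embedded sphere $G_1$ meeting $F_1$ once. Since $F_1$ is never touched, $F_1$ is undisturbed.

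Second, fix the framing. In \cite{auckly2019isotopy} the self-intersection is changed by tubing $G_1$ into classes of the stabilizing summand, and also by replacing $G_1$ with $G_1+kF$-type adjustments via the intersection with $F_1$.

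The parity obstruction works as follows. If $[F]$ is characteristic, every class $x$ with $x\cdot[F]=1$ has $x^2\equiv x\cdot F\equiv1 \pmod 2$. Hence $G_0^2$ is odd, and no adjustment inside $X\#S^2\times S^2$ (an even form on the new summand) can reach self-intersection $0$. This is exactly why the ordinary hypothesis was needed in Theorem \ref{thm:main}. In $S^2\tilde\times S^2\cong \mathbb{CP}^2\#\overline{\mathbb{CP}^2}$, however, there are spheres of odd square, for example a section of square $\pm1$. Tubing $G_1$ into such a sphere, with orientation chosen appropriately, shifts the square by an odd amount. Combined with the even adjustments, this reaches $0$ while keeping a single transverse intersection with $F_1$.

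I expect this framing and parity bookkeeping, redoing the AKMRS computation with the odd summand, to be the main obstacle. I would first try the explicit route: the sum $G_1\#\Sigma$ with $\Sigma$ a suitable section class $s+kf$ of the twisted bundle. Its square $G_1^2+(2k\pm1)$-type values cover all odd shifts, and there is a second sphere available to cancel remaining double points.

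\textbf{Step 4: conclude.} The result is a smooth geometric dual $G$ for $F_1$ in $X\#(S^2\tilde\times S^2)$ in the sense of Definition \ref{def:geodual}. As in the proof of Theorem \ref{thm:main}, the single nonsmooth point of $F_1$ is irrelevant to the construction. Then \cite[Theorem C]{cha2025light} produces a smooth surface topologically isotopic to $F_1$, and hence to $F$.
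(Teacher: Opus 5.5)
Your proposal follows the paper's route: it repeats the proof of Theorem~\ref{thm:main}, runs the AKMRS dual construction in the twisted summand, and finishes with Cha--Kim's Theorem C. Your parity explanation is correct and is exactly why the paper switches summands: a characteristic $[F]$ stays characteristic in $X\#(S^2\times S^2)$, so every $x$ with $x\cdot[F]=1$ has odd square there, while a section of $S^2\tilde\times S^2$ supplies an odd shift.

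One bookkeeping correction to Step 3. Tube $G_0$ into a section $\Sigma=s+kf$ \emph{before} removing double points, so that the fiber $f$ becomes an embedded framed dual of $G_0\#\Sigma$ and the Norman trick with parallel fibers applies; each removal adds $\pm[f]$, shifting the square by $\pm2$, and $k$ is then chosen to make the final square $0$. Also drop the ``$G_1+kF$'' adjustments, since $F_1$ need not be a sphere and has a nonsmooth point.
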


\section{Examples and the smooth genus function}

As a consequence of Theorem \ref{thm:main}, it turns out that there are many nonsmoothable surfaces that become smoothable after a single stabilization.

\begin{ex}[$K3$ surfaces]
    In \cite[Corollary 1.4]{lee1997representing}, Lee and Wilczy\'nski showed that for a nonnegative self-intersection class $x\in H_2(K3)$, there is a nonsmoothable embedded genus $g$ locally flat simple surface $\Sigma$ in this class if $g<\frac12x\cdot x+1$. Recall that the intersection form of the $K3$ surface is $-E_8\oplus-E_8\oplus3\begin{bmatrix}0&1\\1&0\end{bmatrix}$ and let $(e_i,f_i)$ be the basis for the $i$-th hyperbolic summand $\begin{bmatrix}0&1\\1&0\end{bmatrix}$. Specifically,
    \[
    e_i^2=f_i^2=0,\ e_i\cdot f_i=1,\ i=1,2,3.
    \]
    Therefore, there are infinitely many classes in $H_2(K3;\Z)$ that are primitive and ordinary. For example, one can take $x_n:=n\mathrm{PD}(e_1+f_1)+\mathrm{PD}(e_2)$ for $n\in\Z_{>0}$. These are all primitive and ordinary since there is an $e_2$ summand in $x_n$ and $\mathrm{PD}(f_2)^2=0$ but $\mathrm{PD}(f_2)\cdot x_n=1$.  Clearly $x_n^2=2n^2$, thus, there is a nonsmoothable embedded locally flat simple surface $\Sigma_n$ in each class $x_n$ according to \cite[Corollary 1.4]{lee1997representing} as long as the genus of $\Sigma_n$ is smaller than $n^2+1$. Then Theorem \ref{thm:main} implies that $\Sigma_n$ is smoothable in $K3\#(S^2\times S^2)$. As a result, \textbf{we construct infinitely many nonsmoothable embedded surfaces that are smoothable after a stabilization in a $K3$ surface, and the embedded surface can be chosen with arbitrary genus.}

    As calculated in \cite{hamilton2014minimal}, the genus function
    \[
    g_{K3}(x_n)=n^2+1.
    \]
    But it is clear that this number becomes 0 after one stabilization.
\end{ex}

The following proposition shows that after a single stabilization, the nonsmoothable surface may become smoothable but its topological isotopy class may contain infinitely many smooth surfaces that are pairwisely not smoothly isotopic.

\begin{prop}\label{prop:1}
    Let $\Sigma_n$ be the nonsmoothable embedded 2-sphere in the class $x_n\in H_2(E(2))$ defined in the last example. Then the topological isotopy class of $\Sigma_n$ in $E(2)\#S^2\times S^2$ contains infinitely many embedded smooth 2-spheres that are not smoothly isotopic to each other.
\end{prop}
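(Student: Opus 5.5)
By Theorem \ref{thm:main}, $\Sigma_n$ becomes smoothable in $E(2)\#S^2\times S^2$. So the plan is to start from one smooth representative and then produce infinitely many smoothly distinct ones in the same topological isotopy class. The source of these is \cite[Example 1)]{auckly2019isotopy}: there, for $K3=E(2)$ and suitable primitive ordinary classes, Auckly--Kim--Melvin--Ruberman--Schwartz build infinitely many smooth spheres that are pairwise topologically isotopic but not smoothly isotopic. They do this after one stabilization, typically by comparing spheres in $E(2)\#S^2\times S^2$ whose complements, or whose associated surgered or branched-cover manifolds, are distinguished by Seiberg--Witten invariants. My first task is to check that their family lives in the class $x_n=n(e_1+f_1)+e_2$ (Poincaré duals suppressed), or in a class carried to it by a diffeomorphism of $E(2)\#S^2\times S^2$, and that it consists of spheres.

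The key steps, in order, are as follows. First, fix the smooth sphere $S_0\subset E(2)\#S^2\times S^2$ topologically isotopic to $\Sigma_n$, given by the proof of Theorem \ref{thm:main}. Second, produce smooth spheres $S_k$, $k\ge1$, in the class $x_n$ as in \cite[Example 1)]{auckly2019isotopy}. For example, take the standard sphere representing $x_n$ in $E(2)\#S^2\times S^2$ coming from Wall's theorem \cite{wall1964diffeomorphisms}, and modify it by a knot-surgery or rim-surgery type construction along a torus or annulus. The modification is chosen so that each $S_k$ has simply connected complement while the Seiberg--Witten invariants of a suitable associated manifold (for instance, the complement of $S_k$ glued to a fixed piece) differ for different $k$. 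Third, show the $S_k$ are pairwise topologically isotopic. Since all $S_k$ are simple spheres (simply connected complement) in the same primitive ordinary class of a simply connected manifold, the topological classification of such spheres (Lee--Wilczy\'nski, or \cite{cha2025light}, which is what makes the nonsmoothable $\Sigma_n$ and $S_0$ isotopic) implies that they are all topologically isotopic to each other. By the same classification, they are topologically isotopic to $S_0$ and hence to $\Sigma_n$. Fourth, show that they are pairwise not smoothly isotopic, by quoting the invariant from \cite{auckly2019isotopy}.

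The main obstacle is the third step, matching topological isotopy classes. I must check that $\Sigma_n$, viewed in the stabilized manifold, and the exotic family $S_k$ share whatever extra invariant the topological classification requires, such as a Kervaire--Milnor or $\mathrm{km}$-type invariant or an identification of complements rel boundary. Otherwise the $S_k$ could lie in a different topological isotopy class from $\Sigma_n$. To handle this, I would first try to use that $\Sigma_n$ and the $S_k$ all have simply connected complement in a simply connected manifold and represent the same ordinary class. In this setting the known results say that topological isotopy (or at least ambient topological equivalence) is determined by the homology class. I would then upgrade equivalence to isotopy by the standard argument that the relevant self-homeomorphisms of $E(2)\#S^2\times S^2$ acting trivially on homology are topologically isotopic to the identity (Quinn, Perron). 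The exotic invariant in step four is then taken directly from \cite{auckly2019isotopy}.
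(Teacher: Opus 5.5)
Your plan is essentially the paper's proof: the paper takes the family $\{\Sigma_n^j\}$ directly from \cite[Example 1)]{auckly2019isotopy}, which applies because $x_n^2=2n^2$ is even and nonnegative, notes that their complements are simply connected by that construction, and gets topological isotopy to $\Sigma_n$ from the fact that homologous surfaces with simply connected complements in a simply connected 4-manifold are topologically isotopic, citing \cite[Theorem F]{boyer1993realization} via \cite{galvin2026smooth}. Your auxiliary smooth sphere $S_0$ from Theorem \ref{thm:main} is not needed, and the uniqueness input should be Boyer's classification (upgraded to isotopy by Quinn--Perron, as you sketch), not \cite{cha2025light}, whose Theorem C only produces a smooth representative.
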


\begin{proof}
    Since $x_n^2$ is even and non-negative, then \cite[Example 1)]{auckly2019isotopy} implies directly that there are infinitely many embedded smooth 2-spheres, denoted as $\{\Sigma_n^j\}_{j=1}^\infty$, in $E(2)\#S^2\times S^2$ that are not smoothly isotopic to each other in the class $x_n$. As claimed in \cite{galvin2026smooth}, homologous surfaces with simply connected complements in a simply connected 4-manifold are topologically isotopic according to \cite[Theorem F]{boyer1993realization}. The simply connectedness of $E(2)\#S^2\times S^2-\Sigma_n^j$ is a direct consequence of the construction in the proof of the only corollary in \cite{auckly2019isotopy} for $j=1,2\ldots$. Therefore, $\Sigma_n$ and $\Sigma_n^j$ are topologically isotopic.
\end{proof}

The next two examples need the following theorem from \cite[Theorem 1.1]{lee1990locally}.

\begin{them}[{\cite[Theorem 1.1]{lee1990locally}}]\label{thm:lw1990}
Let $X$ be a smooth closed 4-manifold and let $x\in H_2(X;\Z)$ be a primitive and ordinary class. Then there exists a locally flat simple embedded 2-sphere in $X$ representing $x$ if and only if 
\[
b_2(X)\ge\sigma(X),
\]
where $\sigma(X)$ is the signature of $X$.
\end{them}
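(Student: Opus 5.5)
This is a cited theorem of Lee and Wilczy\'nski, so I would reconstruct their argument as a surgery-theoretic existence and uniqueness statement rather than anything smooth. The condition should presumably read $b_2(X)\ge|\sigma(X)|$; I will work with that. The idea is to translate ``locally flat simple $2$-sphere representing $x$'' into ``a topological manifold with boundary of a prescribed type''. Suppose $S$ is such a sphere with tubular neighborhood $N$, a $D^2$-bundle over $S^2$ of Euler number $x\cdot x$. Then $C=X-\mathrm{int}\,N$ is a compact topological $4$-manifold with $\partial C=L(x\cdot x,1)$, up to sign conventions. By the computation of \cite[Lemma 3.1]{hsiang1971embedding} together with primitivity and simplicity, exactly as in the proof of Theorem \ref{thm:main}, $\pi_1(C)=1$.

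\textbf{Necessity.} I would compute the intersection form of $C$ from Mayer--Vietoris. Since $x$ is primitive, $H_2(C)$ is identified with the orthogonal complement $x^\perp$ inside $H_2(X)$ when $x^2\neq0$, with the appropriate modification when $x^2=0$. Because $x$ is ordinary, this form is odd or even in a controlled way.

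The key input is the obstruction to the simply connected manifold $C$ with lens-space boundary existing in the topological category. Even in the topological category there is a constraint: the Rochlin/Kirby--Siebenmann type relation between the signature and the $\mu$-invariant of the lens space, or more plausibly the rank condition that $x^\perp$ must have. I expect the inequality to come from requiring that the form on $x^\perp$, of rank $b_2-1$, be realizable with $\sigma(C)=\sigma(X)\mp1$. Definiteness considerations then force $b_2-1\ge|\sigma(X)|-1$, so the inequality is automatic from $\mathrm{rank}\ge|\sigma|$. This suggests that, as stated, the inequality is essentially always satisfied. The main content is therefore sufficiency, and the ``only if'' direction is formal.

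\textbf{Sufficiency.} This is the main step. Using Freedman's classification of simply connected topological $4$-manifolds with homology-sphere or lens-space boundary (Boyer's extension \cite{boyer1993realization}), I would realize a simply connected $C$ with boundary $L(x^2,1)$ and with the form $x^\perp$ glued correctly, so that $C\cup N$ has the intersection form of $X$. The gluing data must also match the linking form on $\partial C$. Since $x$ is ordinary, the Kirby--Siebenmann invariant of $C\cup N$ can be chosen to equal that of $X$. Freedman's theorem then gives a homeomorphism $C\cup N\cong X$ carrying the zero section to a locally flat sphere representing $x$, possibly after composing with an automorphism of the form, which is realized by a homeomorphism. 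Its complement is $C$, which is simply connected, so the sphere is simple.

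The main obstacle is the lattice-theoretic step. One must show that $x^\perp$, together with the gluing to the Euler-number-$x^2$ disk bundle, is realizable by a form whose ``union'' recovers the form of $X$. This uses the transitivity of the automorphism group on primitive ordinary vectors of given square (Wall), which needs indefiniteness or enough rank. I expect this is exactly where the stated inequality enters, and I would try Wall's transitivity theorem first.
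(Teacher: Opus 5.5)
The paper does not prove this statement; it is quoted from \cite{lee1990locally}. I am therefore comparing your sketch with the standard argument.

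Your ``only if'' discussion reaches the right conclusion by a detour. Since $b_2=b_2^++b_2^-\ge|b_2^+-b_2^-|=|\sigma|$ for every closed $4$-manifold, that direction is vacuous, with or without the absolute value. The speculation about Rochlin-type constraints can be dropped, and so can $\sigma(C)=\sigma(X)\mp1$, which is false when $x\cdot x=0$.

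\textbf{The gap is in sufficiency.} The step you call the main obstacle is left open, and the tool you propose fails exactly where the paper uses the theorem. Wall's transitivity requires $\min(b_2^+,b_2^-)\ge2$. It fails for definite forms, and also for $H$ and $\langle1\rangle\oplus\langle-1\rangle$, the forms of $S^2\times S^2$ and $\mathrm{CP}^2\#\overline{\mathrm{CP}^2}$. Concretely, $\mathrm{Aut}(H)=\{\pm\mathrm{id},\pm\mathrm{swap}\}$, so $2e_1+3e_2$ and $e_1+6e_2$ are primitive ordinary classes of square $12$ in different orbits. The automorphisms of $\langle1\rangle\oplus\langle-1\rangle$ are only sign changes, so $2e_1+5e_2$ and $10e_1+11e_2$ (both of square $-21$) are unrelated. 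With automorphism groups this small, transporting a sphere you already have by a homeomorphism cannot produce the classes $pe_1+qe_2$ ($|p|,|q|>1$) or $2e_1+be_2$ ($|b|>3$) of Section 3. Nor can the inequality ``enter'' at this step, since, as you observed yourself, it always holds.

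\textbf{The missing idea: prescribe the gluing, not just the form on $C$.} Then no transitivity is needed. For $n=x\cdot x\neq0$, $L=H_2(X)$ is the unimodular overlattice of $\Z x\oplus x^\perp$ of index $|n|$. It is determined by an anti-isometry between the two discriminant forms, both cyclic of order $|n|$. Boyer's realization theorem \cite{boyer1993realization} gives a simply connected $C$ with $\partial C\cong-\partial N$, intersection form $x^\perp$, \emph{and} this prescribed identification of $\mathrm{coker}(x^\perp\to(x^\perp)^*)$ with $H_1(\partial C)$. Then $H_2(C\cup N)\cong L$ with the zero section going to $x$ exactly. Freedman realizes this particular isometry by a homeomorphism $C\cup N\to X$.

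\textbf{The Kirby--Siebenmann invariant needs an actual argument.} The image of $x^\perp$ in $L/2L$ is the hyperplane $\bar x^\perp$, whose annihilator is spanned by $\bar x$. Hence $x^\perp$ is even only if $L$ is even or $x$ is characteristic. So for ordinary $x$ and odd $L$, the form on $C$ is odd and both values of $\mathrm{ks}(C)$ occur. For even $L$, the invariant of the closed manifold is $\sigma/8 \bmod 2$ and matches automatically. This is precisely where ``ordinary'' is used: for characteristic $x$ one instead gets the condition $\mathrm{ks}(X)\equiv(\sigma(X)-x\cdot x)/8 \pmod 2$.

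\textbf{The case $x\cdot x=0$ must be handled separately.} Here the boundary is $S^1\times S^2$ and the form on $C$ is degenerate. Choose $y$ with $x\cdot y=1$ and $y\cdot y$ even; this is possible because $x^\perp$ is odd whenever $L$ is. Then $\langle x,\,y-\tfrac{y\cdot y}{2}x\rangle\cong H$ splits off, and $x$ becomes $[S^2\times\mathrm{pt}]$ in an $S^2\times S^2$ summand.

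\textbf{You also silently use $\pi_1(X)=1$.} This hypothesis is missing from the statement as printed, but it cannot be dropped. Since $\pi_1(X-S)\to\pi_1(X)$ is onto, $[S^2\times\mathrm{pt}]$ in $(S^1\times S^3)\#(S^1\times S^3)\#(S^2\times S^2)$ has no simple representative, although $b_2\ge\sigma$ there.
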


\begin{ex}[$S^2\times S^2$]
    In \cite[Theorem 1]{kuga1984representing}, it is proved that $pe_1+qe_2\in H_2(S^2\times S^2;\Z)$ can be represented by a smoothly embedded $2$-sphere if and only if $|p|\le1$ or $|q|\le1$, where $p,q\in\Z$ and $e_1,e_2$ are the standard generators of $H_2(S^2\times S^2;\Z)$. Because $pe_1+qe_2\in H_2(S^2\times S^2;\Z)$ is primitive and ordinary for any coprime pair $(p,q)$ with $|p|,|q|>1$, Theorem \ref{thm:lw1990} implies that there is a locally flat simple embedded 2-sphere representing $pe_1+qe_2$. Therefore, this 2-sphere cannot be smooth according to \cite[Theorem 1]{kuga1984representing}. But this 2-sphere is smoothable in $S^2\times S^2\#S^2\times S^2$ by Theorem \ref{thm:main}. As a result, \textbf{we obtain infinitely many nonsmoothable embedded 2-spheres in $S^2\times S^2$ that are smoothable after a stabilization.}

    As calculated in \cite{ruberman1996minimal}, the genus function
    \[
    g_{S^2\times S^2}(pe_1+qe_2)=(|p|-1)(|q|-1)>0\ for\ |p|,|q|>1.
    \]
    But it is clear that this number becomes 0 after one stabilization.
\end{ex}

\noindent We conclude this paper with examples in $\mathrm{CP}^2\#\overline{\mathrm{CP}^2}$.

\begin{ex}[$\mathrm{CP}^2\#\overline{\mathrm{CP}^2}$]
    In the main theorem of \cite{luo1988representing}, it is proved that $ae_1+be_2\in H_2(\mathrm{CP}^2\#\overline{\mathrm{CP}^2};\Z)$ can be represented by a smoothly embedded $2$-sphere if and only if either $||a|-|b||\le1$ or $(a,b)=(\pm2,0)\ or\ (0,\pm2)$, where $a,b\in\Z$ and $e_1,e_2$ are the standard generators of $H_2(\mathrm{CP}^2\#\overline{\mathrm{CP}^2};\Z)$. Because $2e_1+be_2\in H_2(\mathrm{CP}^2\#\overline{\mathrm{CP}^2};\Z)$ is primitive and ordinary for any odd number $|b|>3$, Theorem \ref{thm:lw1990} implies that there is a locally flat simple embedded 2-sphere representing $2e_1+be_2$. Therefore, this 2-sphere cannot be smooth according to the main theorem of \cite{luo1988representing}. But this 2-sphere is smoothable in $\mathrm{CP}^2\#\overline{\mathrm{CP}^2}$ by Theorem \ref{thm:main}. As a result, \textbf{we obtain infinitely many nonsmoothable embedded 2-spheres in $\mathrm{CP}^2\#\overline{\mathrm{CP}^2}$ that are smoothable after a stabilization.}

    As calculated in \cite{ruberman1996minimal}, the genus function
    \[
    g_{\mathrm{CP}^2\#\overline{\mathrm{CP}^2}}(2e_1+be_2)=\frac{|b|(|b|-3)}{2}.
    \]
    But it is clear that this number becomes 0 after one stabilization.
\end{ex}

\bibliographystyle{alpha}
\bibliography{references}

\noindent Zuyi Zhang, 
Beijing International Center for Mathematical Research, 
 Peking University, Beijing, 100871, China\\
\noindent Email: zhangzuyi1993@pku.edu.cn\\

\end{document}